\newtheorem{precor}{{\bf Corollary}}
\newenvironment{cor}{\begin{precor}{\hspace{-0.5
               em}{\bf.\ }}}{\end{precor}}
\newtheorem{precon}{{\bf Conjecture}}
\newtheorem{prealphcon}{{\bf Conjecture}}
\newtheorem{predefin}{{\bf Definition}}
\newtheorem{preexm}{{\bf Example}}
\newtheorem{preappl}{{\bf Application}}
\newtheorem{prelem}{{\bf Lemma}}
\newtheorem{preproof}{{\bf Proof.\ }}
\newenvironment{proof}[1]{\begin{preproof}{\rm
               #1}\hfill{$\blacksquare$}}{\end{preproof}}
\newtheorem{pretheorem}{{\bf Theorem}}
\newenvironment{theorem}{\begin{pretheorem}{\hspace{-0.5
               em}{\bf.\ }}}{\end{pretheorem}}
\newtheorem{prealphtheorem}{{\bf Theorem}}
\newtheorem{prealphlem}{{\bf Lemma}}
\newtheorem{prepro}{{\bf Proposition}}
\newtheorem{preprb}{{\bf Problem}}
\newtheorem{prerem}{{\bf Remark}}
\newtheorem{preapp}{{\bf Application}}
\newtheorem{prequ}{{\bf Question}}
\def\conct[#1,#2]{\mbox {${#1} \leftrightarrow {#2}$}}
\def\dconct[#1,#2]{\mbox {${#1} \rightarrow {#2}$}}
\def\deg[#1,#2]{\mbox {$d_{_{#1}}(#2)$}}
\def\mindeg[#1]{\mbox {$\delta_{_{#1}}$}}
\def\maxdeg[#1]{\mbox {$\Delta_{_{#1}}$}}
\def\outdeg[#1,#2]{\mbox {$d_{_{#1}}^{^+}(#2)$}}
\def\minoutdeg[#1]{\mbox {$\delta_{_{#1}}^{^+}$}}
\def\maxoutdeg[#1]{\mbox {$\Delta_{_{#1}}^{^+}$}}
\def\indeg[#1,#2]{\mbox {$d_{_{#1}}^{^-}(#2)$}}
\def\minindeg[#1]{\mbox {$\delta_{_{#1}}^{^-}$}}
\def\maxindeg[#1]{\mbox {$\Delta_{_{#1}}^{^-}$}}
\def\dre[#1,#2,#3]{\mbox {${\cal E}^{^{#3}}(#1,#2)$}}
\def\var[#1,#2]{\mbox {${\rm Var}_{_{#1}}(#2)$}}
\def\ls[#1]{\mbox {$\xi^{^{#1}}$}}
\def\hom[#1,#2]{\mbox {${\rm Hom}({#1},{#2})$}}
\def\onvhom[#1,#2]{\mbox {${\rm Hom^{v}}(#1,#2)$}}
\def\onehom[#1,#2]{\mbox {${\rm Hom^{e}}(#1,#2)$}}
\def\core[#1]{\mbox {$#1^{^{\bullet}}$}}
\def\cay[#1,#2]{\mbox {${\rm Cay}({#1},{#2})$}}
\def\sch[#1,#2,#3]{\mbox {${\rm Sch}({#1},{#2},{#3})$}}
\def\cays[#1,#2]{\mbox {${\rm Cay_{s}}({#1},{#2})$}}
\def\dirc[#1]{\mbox {$\stackrel{\rightarrow}{C}_{_{#1}}$}}
\def\cycl[#1]{\mbox {${\bf Z}_{_{#1}}$}}
\begin{document}

\begin{center} 
{\Large \bf On The Circular Altitude of Graphs}\\
\vspace{0.3 cm}
{\bf Saeed Shaebani}\\
{\it School of Mathematics and Computer Science}\\
{\it Damghan University}\\
{\it P.O. Box {\rm 36716-41167}, Damghan, Iran}\\
{\tt shaebani@du.ac.ir}\\ \ \\
\end{center}
\begin{abstract}
\noindent Peter Cameron introduced the concept of the circular altitude of graphs \cite{cameron};
a parameter which was shown by Bamberg et al. \cite{bamberg} that provides a lower bound
on the circular chromatic number. In this note, we investigate this parameter and show that the
circular altitude of a graph is equal to the maximum of circular altitudes of its blocks.
Also, we show that homomorphically equivalent graphs have the same circular altitudes.
Finally, we prove that the circular altitude of the Cartesian product of two graphs is equal
to the maximum of circular altitudes of its factors.
\\

\noindent {\bf Keywords:}\ {Circular altitude, Monotonic cycle, Block, Homomorphism, Cartesian product.}\\

\noindent {\bf Mathematics Subject Classification: 05C38, 05C40, 05C60, 05C76}
\end{abstract}
\section{Introduction}

Unless otherwise stated, considered graphs in this paper are simple
and also have nonempty and finite vertex sets. Also,
the vertex set and the edge set of a graph $G$
are denoted by the symbols $V(G)$ and $E(G)$, respectively.

Let $G$ be a graph and $\mathcal{U}:\{1,2,\ldots , |V(G)|\}  \rightarrow V(G)$ be a bijection.
For simplicity, for each $i$ in $\{1,2,\ldots , |V(G)|\}$, we write $u _{i}$ instead of
$\mathcal{U}(i)$.
By a {\it $\mathcal{U}$-monotonic cycle} of length $m$, we mean a sequence
$u_{i_{1}},u_{i_{2}},\ldots , u_{i_{m}}$ of $m$ pairwise distinct vertices of $G$
such that whenever $m>1$, the following three conditions hold simultaneously :
\begin{itemize}
\item $i_{1} < i_{2} < \cdots < i_{m}$;
\item For each $t$ with $1\leq t \leq m-1$, two vertices $u_{i_{t}}$
and $u_{i_{t+1}}$ are adjacent;
\item Two vertices $u_{i_{m}}$ and $u_{i_{1}}$ are adjacent.
\end{itemize}
One can consider the bijection $\mathcal{U}$ as a linear ordering
of $V(G)$ like $u_{1},u_{2},\ldots , u_{|V(G)|}$; and it can be thought of a
$\mathcal{U}$-monotonic cycle of length $m$ as a single vertex $u_{i_{1}}$
(for $m=1$), an edge $u_{i_{1}}u_{i_{2}}$ with $i_{1} < i_{2}$ (for $m=2$),
or a cycle $u_{i_{1}}\sim u_{i_{2}}\sim \cdots \sim  u_{i_{m}}\sim u_{i_{1}}$
such that $u_{i_{1}},u_{i_{2}},\ldots , u_{i_{m}}$ appear according to the
linear ordering $u_{1},u_{2},\ldots , u_{|V(G)|}$ (for $m\geq 3$). For a linear ordering
$\mathcal{U}:\{1,2,\ldots , |V(G)|\}  \rightarrow V(G)$, the symbol
$|\mathcal{U}|$ stands for the maximum length of a $\mathcal{U}$-monotonic 
cycle. {\it The circular altitude} of a graph $G$, denoted by $\alpha ^o (G)$, is
defined as the minimum of $|\mathcal{U}|$, where $\mathcal{U}$ ranges over
all linear orderings of $V(G)$. In other words, the circular altitude of a graph $G$
is the maximum positive integer $m$ for which every linear ordering
$\mathcal{U}$ of $V(G)$ admits a $\mathcal{U}$-monotonic cycle of length 
at least $m$.

Let $\mathcal{U}_{1}$ be a linear ordering
of $V(G)$ of the form $u_{1},u_{2},\ldots , u_{|V(G)|}$.
One can correspond this linear ordering to a circle clock
of radius $1$ for which $u_{1},u_{2},\ldots , u_{|V(G)|}$
are arranged clockwise around it with the additional property that
$u_{1}$ is at the highest point of this circle clock. Now, by
rotating all the arranged vertices counterclockwise
$\frac{2\pi }{k}$ radians,
we obtain the linear ordering $\mathcal{U}_{2}$
of $V(G)$ of the form $u_{2},u_{3},\ldots , u_{|V(G)|},u_{1}$.
Continuing in this procedure $k-1$ times, we gain $\mathcal{U}_{k}$ as the linear ordering
of $V(G)$ of the form $u_{k},u_{k+1},\ldots , u_{|V(G)|},u_{1},u_{2},\ldots , u_{k-1}$.
It is obvious that if there exists a 
$\mathcal{U}_{1}$-monotonic cycle of length $m$, 
then for each $k$ in $\{1,2,\ldots , |V(G)|\}$,
there exist $\mathcal{U}_{k}$-monotonic cycles of length $m$, as well.
Therefore, 
$|\mathcal{U}_{1}|=|\mathcal{U}_{2}|=\cdots  =|\mathcal{U}_{|V(G)|}|$.
This fact shows that for determining the circular altitude of $G$,
if we regard an arbitrary vertex $x$ in $V(G)$ as fixed,
we may just restrict our attention to those linear orderings whose first terms are $x$; i.e.,
those linear orderings that begin with $x$.
Also, $|\mathcal{U}_{1}|=|\mathcal{U}_{2}|=\cdots  =|\mathcal{U}_{|V(G)|}|$ implies
that one may just focus on circular orderings of $V(G)$ instead of linear orderings of $V(G)$,
and then consider monotonic cycles in these circular orderings. Nevertheless,
for the sake of simplification, we just use linear orderings rather than circular orderings.

An interesting property of the citcular altitude parameter is its monotonicity.
This parameter is increasing in the sense that if a graph $H$ is a subgraph of a graph $G$,
then $\alpha ^o (H) \leq \alpha ^o (G)$. As a consequence, the clique number
is a lower bound for the circular altitude. So, the inequality
$\omega (G) \leq \alpha ^o (G)$ holds for every graph $G$. Also, 
since the circular altitude is a parameter
related to monotonic cycles, it is a natural proposition that if $\alpha ^o (G) \geq 3$,
then $\alpha ^o (G)$ is greater than or equal to the girth of $G$ \cite{bamberg}.

The concept of circular altitude of graphs was introduced by Cameron in \cite{cameron}
and it was investigated by Bamberg, Corr, Devillers, Hawtin, Pivotto, and Swartz in 
\cite{bamberg}. Bamberg et al. showed that the circular altitude of a graph provides
a lower bound on its circular chromatic number; that is, $\alpha ^o (G) \leq \chi _{c} (G)$.
Also, they provided some examples of graphs with 
$\omega (G) < \alpha ^o (G) < \chi _{c} (G)$.

In this paper, we investigate the circular altitude of graphs. We prove that the circular
altitude of a graph is equal to the maximum of circular altitudes of its blocks. Also,
we show that homomorphically equivalent graphs have the same circular altitudes.
Finally, we prove that the circular altitude of the Cartesian product of two graphs is equal
to the maximum of circular altitudes of its factors.

\section{Reduction to $2$-connected graphs}

This section concerns the relation between the circular
altitude of a graph and circular altitudes of its blocks. We show that for the
circular altitude parameter, we can restrict the study of all graphs just
to the study of their blocks.

\noindent A {\it cut-vertex} of a graph, is a vertex in such a way that removing
it from the graph results in a new graph with a higher number of components.
A graph is called {\it nonseparable} if it is connected and has no cut-vertices.
A {\it block} of a graph $G$ is a nonseparable subgraph of $G$ which is not
a subgraph of another nonseparable subgraph of $G$. In other words,
a block of a graph $G$ is a maximal nonseparable subgraph of $G$.
Every block of a graph is a complete graph with one vertex or is a complete graph
with two vertices or is a $2$-connected graph.

\noindent In the next theorem, we show that the circular
altitude of every graph equals the maximum of circular altitudes of its blocks.
Since $\alpha ^o (K_{1})=1$ and
$\alpha ^o (K_{2})=2$, therefore, the study of circular altitude of graphs
can be reduced to the study of circular altitude of $2$-connected graphs.

\begin{theorem}\label{block}{The circular
altitude of every graph equals the maximum of circular altitudes of its blocks.
}
\end{theorem}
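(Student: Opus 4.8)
The plan is to prove the two inequalities $\alpha^o(G) \geq \max_B \alpha^o(B)$ and $\alpha^o(G) \leq \max_B \alpha^o(B)$ separately, where $B$ ranges over the blocks of $G$. The first inequality is immediate from the monotonicity of the circular altitude noted in the introduction: each block $B$ is a subgraph of $G$, so $\alpha^o(B) \leq \alpha^o(G)$, and taking the maximum over all blocks gives $\max_B \alpha^o(B) \leq \alpha^o(G)$. We may also assume $G$ is connected, since the blocks of $G$ are exactly the blocks of its components, and a linear ordering of $V(G)$ that lists the vertices of one component as a contiguous terminal segment shows $\alpha^o(G)$ equals the maximum of the circular altitudes of the components; so it suffices to handle connected $G$.

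For the reverse inequality, set $m = \max_B \alpha^o(B)$ and, for each block $B$, fix a linear ordering $\mathcal{U}_B$ of $V(B)$ witnessing $|\mathcal{U}_B| \leq \alpha^o(B) \leq m$; by the rotation argument from the introduction we are free to choose, for each block, which of its vertices sits first. The goal is to assemble these orderings into a single linear ordering $\mathcal{U}$ of $V(G)$ with $|\mathcal{U}| \leq m$. The natural device is the block-cut-tree of $G$: root it, and build $\mathcal{U}$ by a depth-first traversal so that, for every block $B$, the vertices of $B$ occupy a contiguous interval of $\mathcal{U}$, with the cut-vertex of $B$ nearest the root placed first within that interval, and with the subtrees hanging off the other cut-vertices of $B$ inserted immediately after their respective cut-vertices. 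Concretely, I would proceed by induction on the number of blocks: remove a leaf block $L$ of the block-cut-tree, attached to the rest of $G$ at a single cut-vertex $v$; by induction the graph $G' = G - (V(L)\setminus\{v\})$ has a linear ordering $\mathcal{U}'$ with $|\mathcal{U}'| \leq m$; now splice the ordering of $V(L)\setminus\{v\}$ (taken from $\mathcal{U}_L$ with $v$ first, then its remaining vertices in their $\mathcal{U}_L$-order) into $\mathcal{U}'$ directly after the position of $v$, yielding $\mathcal{U}$.

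The crux is to verify that this splicing does not create a long monotonic cycle, i.e. that every $\mathcal{U}$-monotonic cycle has length at most $m$. The key structural fact is that any cycle of $G$ lies entirely within a single block: so a $\mathcal{U}$-monotonic cycle of length $\geq 3$ uses only vertices of one block $B$. If $B \subseteq V(G')$, the cycle is $\mathcal{U}'$-monotonic (the relative order of $V(G')$ is unchanged by the splice), hence has length $\leq m$. If $B = L$, I must check that the restriction of $\mathcal{U}$ to $V(L)$ induces the \emph{same} relative order as $\mathcal{U}_L$ — this is exactly why we insisted on placing $v$ first in the spliced block $L$ and keeping the remaining $V(L)$ in $\mathcal{U}_L$-order, consecutively; then any $\mathcal{U}$-monotonic cycle in $L$ is $\mathcal{U}_L$-monotonic and so has length $\leq |\mathcal{U}_L| \leq m$. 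Monotonic cycles of length $1$ or $2$ trivially have length $\leq m$ since $m \geq \alpha^o(K_2) = 2$ whenever $G$ has an edge (and $\alpha^o(K_1)=1$ handles the edgeless case). I expect the main obstacle to be the bookkeeping in this last verification: making precise that "$v$ first, rest contiguous in original order" forces the induced order on $V(L)$ to agree with $\mathcal{U}_L$ \emph{as a circular order}, which is all that is needed since (as the introduction observes) only the circular order of a block's vertices affects its monotonic cycles — the cyclic rotation that moves $v$ to the front of $\mathcal{U}_L$ preserves $|\mathcal{U}_L|$.
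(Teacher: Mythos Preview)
Your proof is correct. Once you insert $V(L)\setminus\{v\}$ immediately after $v$, with $\mathcal{U}_L$ rotated so that $v$ sits first, the induced linear order on $V(L)$ is literally $\mathcal{U}_L$ and the induced order on $V(G')$ is untouched; since every cycle of $G$ lies in a single block, any $\mathcal{U}$-monotonic cycle of length $\geq 3$ is either $\mathcal{U}'$-monotonic (length $\leq |\mathcal{U}'| \leq m$) or $\mathcal{U}_L$-monotonic (length $\leq |\mathcal{U}_L| \leq m$). One cosmetic point: your DFS sentence asserting that ``the vertices of $B$ occupy a contiguous interval of $\mathcal{U}$'' is not literally true once subtrees are spliced in after interior cut-vertices of $B$, but your verification never actually uses contiguity --- only that the relative order on each block's vertex set agrees with the chosen block ordering --- so this does not affect the argument.

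The paper packages the same ingredients differently. It first proves two stand-alone decomposition lemmas: $\alpha^o$ is the maximum over components, and if $H$ is connected with a cut-vertex $x$ and pieces $H'_i = H[V(H_i)\cup\{x\}]$, then $\alpha^o(H)=\max_i \alpha^o(H'_i)$ (proved by merging orderings $\mathcal{W}_i$ of the $H'_i$ that all begin with $x$ --- your splicing idea applied at one cut-vertex to all incident pieces at once). It then finishes not by induction down the block--cut tree but by a minimality trick: take a vertex-minimal subgraph $\mathcal{H}$ of $G$ with $\alpha^o(\mathcal{H})=\alpha^o(G)$; by the two lemmas $\mathcal{H}$ must be connected with no cut-vertex, hence lies inside some block $\mathcal{B}$, forcing $\alpha^o(\mathcal{B})=\alpha^o(G)$. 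Your leaf-block induction is more constructive and yields an explicit witness ordering for all of $G$; the paper's minimality argument is a bit slicker but non-constructive.
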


\begin{proof}{
The proof falls into three independent steps, as follows.

\textbf{Step 1.} In this step, we show that the circular
altitude of a graph $G$ is equal to the maximum of circular altitudes of its components.
In this regard, suppose that all components of $G$ are
$G_{1},G_{2},\ldots , G_{m}$. Since $G_{1},G_{2},\ldots , G_{m}$ are subgraphs of
$G$, we have
\begin{center}
$\alpha ^o (G) \geq \max \{\alpha ^o (G_{1}), \alpha ^o (G_{2}), \ldots ,\alpha ^o (G_{m})\}$.
\end{center}

\noindent What is left in this step is to show that
$\alpha ^o (G) \leq \max \{\alpha ^o (G_{1}), \alpha ^o (G_{2}), \ldots ,\alpha ^o (G_{m})\}$.
For each $i$ in $\{1,\ldots , m\}$ let the bijection
$\mathcal{U}_{i}:\{1,2,\ldots , |V(G_{i})|\}  \rightarrow V(G_{i})$, which is considered as
$v_{i_{1}},v_{i_{2}},\ldots, v_{i_{|V(G_{i})|}}$,
be a linear ordering of $V(G_{i})$
in such a way that $\alpha ^o (G_{i})=|\mathcal{U}_{i}|$. Therefore, for each $i$ in $\{1,\ldots ,m\}$,
the maximum length of a $\mathcal{U}_{i}$-monotonic cycle equals $\alpha ^o (G_{i})$.
Now, according to the linear orderings $\mathcal{U}_{1}, \mathcal{U}_{2}, \ldots, \mathcal{U}_{m}$,
regard the natural linear ordering $\mathcal{U}:=\mathcal{U}_{1} \mathcal{U}_{2} \ldots \mathcal{U}_{m}$
as a linear ordering of $V(G)$.
The linear ordering $\mathcal{U}$ satisfies both of the following
two conditions :
\begin{itemize}
\item If $1\leq i < j \leq m$ then each vertex of $G_{i}$ occurs before
each vertex of $G_{j}$.
\item For each $i$ in $\{1,\ldots ,m\}$ and for any two vertices
$a$ and $b$ in $V(G_{i})$, the vertex $a$ occurs before the vertex $b$ in the linear ordering
$\mathcal{U}$ iff $a$ occurs before $b$ in the linear ordering
$\mathcal{U}_{i}$.
\end{itemize}
Since each $\mathcal{U}$-monotonic cycle lies in exactly one of 
$G_{1},G_{2},\ldots , G_{m}$, we obtain
\begin{center}
$|\mathcal{U}| \leq \max \{|\mathcal{U}_{1}|,|\mathcal{U}_{2}|,\ldots ,|\mathcal{U}_{m}|\}$.
\end{center}
Therefore,
\begin{center}
$\alpha ^o (G) \leq |\mathcal{U}| \leq \max \{\alpha ^o (G_{1}) ,\alpha ^o (G_{2}) ,\ldots ,\alpha ^o (G_{m}) \}$.
\end{center}
So, $\alpha ^o (G) = \max \{\alpha ^o (G_{1}) ,\alpha ^o (G_{2}) ,\ldots ,\alpha ^o (G_{m}) \}$;
which is desired in this step.

\textbf{Step 2.} In this step, we suppose that $H$ is a connected graph with a cut-vertex
$x$, and $H_{1},H_{2},\ldots , H_{m}$ are all components of $H-\{x\}$, where
$H-\{x\}$ is the graph obtained by deleting the vertex $x$ from the graph $H$. Also,
for each $i$ in $\{1,\ldots ,m\}$, let $H'_{i}$ be the subgraph of $H$ induced by $V(H_{i})\cup \{x\}$.
In this step, the aim is proving that
$\alpha ^o (H) = \max \{\alpha ^o (H'_{1}) ,\alpha ^o (H'_{2}) ,\ldots ,\alpha ^o (H'_{m}) \}$.
For proving this fact, we should mention that for each of the graphs $H,H'_{1},H'_{2},\ldots ,H'_{m}$,
without loss of generality, 
we can just restrict our attention to those linear orderings that begin with $x$.
For each $i$ in $\{1,2,\ldots ,m\}$, let $\mathcal{W}_{i}$ be a linear ordering of
$V(H'_{i})$ that begins with $x$ and also $\alpha ^o (H'_{i})= |\mathcal{W}_{i}|$.
Now, using $\mathcal{W}_{1},\mathcal{W}_{2},\ldots ,\mathcal{W}_{m}$, we consider
a linear ordering $\mathcal{W}$ of $V(H)$ that
satisfies the following two conditions simultaneously :
\begin{itemize}
\item The linear ordering $\mathcal{W}$ begins with $x$.
\item For each $i$ in
$\{1,2,\ldots ,m\}$ and for any two vertices $a$ and $b$ in $V(H'_{i})$, the vertex $a$ occurs
before the vertex $b$ in the linear ordering $\mathcal{W}$ iff $a$ occurs
before $b$ in the linear ordering $\mathcal{W}_{i}$.
\end{itemize}
Now, since any $\mathcal{W}$-monotonic
cycle whose length is $|\mathcal{W}|$ settles in exactly one of $H'_{1},H'_{2},\ldots , H'_{m}$,
the inequality
$|\mathcal{W}| \leq \max \{|\mathcal{W}_{1}|,|\mathcal{W}_{2}|,\ldots ,|\mathcal{W}_{m}|\}$
holds.
Thus,
\begin{center}
$\alpha ^o (H) \leq |\mathcal{W}| \leq \max \{\alpha ^o (H'_{1}) ,
\alpha ^o (H'_{2}) ,\ldots ,\alpha ^o (H'_{m}) \}$.
\end{center}
On the other hand, since
$H'_{1},H'_{2},\ldots , H'_{m}$ are subgraphs of $H$, we have
\begin{center}
$\alpha ^o (H) \geq \max \{\alpha ^o (H'_{1}) ,
\alpha ^o (H'_{2}) ,\ldots ,\alpha ^o (H'_{m}) \}$.
\end{center}
Hence, 
$\alpha ^o (H) = \max \{\alpha ^o (H'_{1}) ,
\alpha ^o (H'_{2}) ,\ldots ,\alpha ^o (H'_{m}) \}$; and we are done in this step.

We are now in a position to complete the proof of the theorem in Step 3.

\noindent \textbf{Step 3.} Let $\mathcal{G}$ be a graph. We show that
$\alpha ^o (\mathcal{G})= \max \{\alpha ^{o} (\mathcal{B})\ |\ \mathcal{B}\ {\rm is\ a\ block\ of\ } \mathcal{G}\}$.
The inequality 
$\alpha ^o (\mathcal{G}) \geq \max \{\alpha ^{o} (\mathcal{B})\ |\ \mathcal{B}\ {\rm is\ a\ block\ of\ } \mathcal{G}\}$
follows immediately from the fact that each block of $\mathcal{G}$ is a subgraph of $\mathcal{G}$.

\noindent For proving
$\alpha ^o (\mathcal{G})= \max \{\alpha ^{o} (\mathcal{B})\ |\ \mathcal{B}\ {\rm is\ a\ block\ of\ } \mathcal{G}\}$,
it is sufficient to show that there exists a block of $\mathcal{G}$ whose circular altitude equals $\alpha ^o (\mathcal{G})$.
In this regard, define $\mathcal{S}$ as the set of all subgraphs of $\mathcal{G}$ such that their circular altitudes are
equal to $\alpha ^o (\mathcal{G})$. More precisely,
\begin{center}
$\mathcal{S}:=\{\mathcal{T}\ |\ \mathcal{T} {\rm \ is\ a\ subgraph\ of\ } \mathcal{G}\ {\rm and} \ 
\alpha ^o (\mathcal{T})=\alpha ^o (\mathcal{G})\}$.
\end{center}
Let us regard a graph $\mathcal{H}$ in $\mathcal{S}$ with the minimum number of vertices.
Indeed, $\mathcal{H}$ is a subgraph of $\mathcal{G}$ with the minimum number of vertices for
which $\alpha ^o (\mathcal{H})=\alpha ^o (\mathcal{G})$.
According to the Step 1, the graph $\mathcal{H}$ is connected; since otherwise, there exists a component
of $\mathcal{H}$ (with a smaller number of vertices
than to $|V(\mathcal{H})|$) such that its circular altitude equals $\alpha ^o (\mathcal{G})$,
which is a contradiction to the minimality of the number of vertices of $\mathcal{H}$. Also,
on account of
the Step 2, the graph $\mathcal{H}$ has not any cut-vertices. So, $\mathcal{H}$ is a subgraph of $\mathcal{G}$
which is connected and has not any cut-vertices. Consequently, $\mathcal{H}$ is a subgraph of a block
$\mathcal{B}$ of $\mathcal{G}$. Now, since $\mathcal{H}$ is a subgraph of $\mathcal{B}$
and $\mathcal{B}$ is a subgraph of $\mathcal{G}$, we conclude that 
$\alpha ^o (\mathcal{G})=\alpha ^o (\mathcal{H})\leq \alpha ^o (\mathcal{B})\leq \alpha ^o (\mathcal{G})$.
Accordingly, $\alpha ^o (\mathcal{B})=\alpha ^o (\mathcal{G})$; and the assertion follows.
}
\end{proof}

\section{Circular altitude and graph homomorphism}

A {\it homomorphism} from a graph $G$ to a graph $H$, is a function from the vertex set
of $G$ to the vertex set of $H$ which preserves all adjacencies. To be more precise,
a homomorphism from a graph $G$ to a graph $H$ is a function $f:V(G)\rightarrow V(H)$
such that for any two vertices $x$ and $y$ in $V(G)$ the following condition holds
\begin{center}
$xy \in E(G) \Longrightarrow f(x)f(y) \in E(H).$
\end{center}
We write $G\longrightarrow H$ whenever there exists a graph homomorphism from $G$ to $H$.
Two graphs $G$ and $H$ are called {\it homomorphically equivalent} graphs if both of the
conditions $G\longrightarrow H$ and $H\longrightarrow G$ are satisfied.
The theory of graph homomorphisms is one of the most interesting
concepts in graph theory.
Many important notions and parameters in graph theory
can be interpreted in terms of graph homomorphisms. For various
examples, one can see the references \cite{godsil, hahn, hell}. For the circular altitude parameter,
a natural question is whether homomorphically equivalent graphs have the same circular
altitudes. The next theorem gives an affirmative answer to this interesting question.

\begin{theorem}\label{hom}{Homomorphically equivalent graphs have the same 
circular altitudes.
}
\end{theorem}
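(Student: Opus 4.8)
The plan is to reduce the statement to a single, more tractable claim: if $G \longrightarrow H$, then $\alpha^o(G) \le \alpha^o(H)$. Once this monotonicity-under-homomorphism property is established, the theorem follows immediately, since homomorphically equivalent graphs satisfy both $G \longrightarrow H$ and $H \longrightarrow G$, whence $\alpha^o(G) \le \alpha^o(H)$ and $\alpha^o(H) \le \alpha^o(G)$, forcing equality.

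So fix a homomorphism $f : V(G) \to V(H)$ and set $m := \alpha^o(H)$; the goal is to show every linear ordering $\mathcal{U}$ of $V(G)$ admits a $\mathcal{U}$-monotonic cycle of length at least $m$. The natural idea is to push forward an arbitrary ordering of $V(G)$ to an ordering of $V(H)$, invoke $\alpha^o(H) = m$ there, and pull the resulting monotonic cycle back through $f$. The obstacle is that $f$ need not be injective, so a linear ordering of $V(G)$ does not directly induce one on $V(H)$, and a monotonic cycle in $H$ pulled back along $f$ need not consist of distinct vertices — preimages of consecutive vertices in the $H$-cycle could coincide or create chords that collapse the cycle. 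I would handle this by working in the other direction: given a linear ordering $\mathcal{U} = u_1, u_2, \ldots, u_{|V(G)|}$ of $V(G)$, I would choose for each vertex $v \in V(H)$ a representative among its $f$-preimages — say the one with smallest index in $\mathcal{U}$ (and if $v$ has no preimage it is simply irrelevant) — and let $\mathcal{V}$ be the linear ordering of $V(H)$ (or of $f(V(G))$, padding arbitrarily) inherited from the order of these representatives in $\mathcal{U}$. Actually the cleaner route, given the block/component machinery available, is to first reduce $G$ to its component, and more importantly to note that we may assume $f$ is surjective onto a subgraph $H'$ of $H$ (replace $H$ by the image, which is a subgraph so $\alpha^o(H') \le \alpha^o(H)$, and it suffices to bound $\alpha^o(G)$ below... wait, that goes the wrong way).

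Let me restate the workable plan. To prove $G \longrightarrow H \Rightarrow \alpha^o(G) \le \alpha^o(H)$: let $\mathcal{U}$ be any linear ordering of $V(G)$; I must produce a $\mathcal{U}$-monotonic cycle of length $\ge \alpha^o(H)$. Build a linear ordering $\mathcal{V}$ of $V(H)$ as follows: for each $v \in V(H)$ in the image of $f$, let $g(v)$ be the $\mathcal{U}$-least vertex of $G$ with $f(g(v)) = v$; order the image-vertices of $H$ by the $\mathcal{U}$-order of their $g$-values, and append the non-image vertices of $H$ at the end in any order. Since $\alpha^o(H) = \min_{\mathcal{V}} |\mathcal{V}|$, there is a $\mathcal{V}$-monotonic cycle $v_{j_1}, v_{j_2}, \ldots, v_{j_k}$ in $H$ with $k \ge \alpha^o(H)$; because $H$ has no monotonic cycle of length exceeding the circular altitude only on average — careful: we need $|\mathcal{V}| \ge \alpha^o(H)$, which holds since $\alpha^o(H)$ is the minimum of $|\mathcal{W}|$ over all orderings $\mathcal{W}$. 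This monotonic cycle uses only image-vertices (the appended ones are isolated in no useful way — actually they could have edges; to be safe, take $H' = f(G)$ as an induced-on-image subgraph and note $\alpha^o(H') \le \alpha^o(H)$, then... hmm, still wrong direction). The genuinely delicate point, and the one I expect to be the main obstacle, is exactly this: pulling back a monotonic cycle from $H$ to $G$ via $g$ requires that $g(v_{j_1}), \ldots, g(v_{j_k})$ are pairwise distinct (automatic, since $g$ is injective and the $v_{j_t}$ are distinct), that consecutive ones are adjacent in $G$ (this needs $G \longrightarrow H$ to give edges, but $f$ going forward preserves edges, not backward!), and that they respect the $\mathcal{U}$-order (automatic by construction of $\mathcal{V}$). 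The adjacency issue is real — one cannot simply lift edges of $H$ to edges of $G$. The correct resolution, which I would pursue, is that the monotonic cycle of length $\ge \alpha^o(G)$ we want in $\mathcal{U}$ should be found by pushing forward: take an optimal ordering $\mathcal{U}$ achieving $|\mathcal{U}| = \alpha^o(G)$, and I claim its pushforward ordering $\mathcal{V}$ of $V(H)$ satisfies $|\mathcal{V}| \ge \alpha^o(G)$ is false in general too. Given the difficulty, I would instead prove the two inequalities by the following symmetric trick: compose homomorphisms. If $G \longrightarrow H \longrightarrow G$, the composite $G \to G$ is an endomorphism; iterate to land in a retract (a core-like subgraph), and use that $G$ and its core are homomorphically equivalent with the core being an induced subgraph — reducing everything to the case where one graph is an induced subgraph of the other and the homomorphism is a retraction, where a retraction $r : H \to H'$ restricted to edges does let us transport monotonic cycles cleanly because $r$ fixes $H'$ pointwise. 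This retract reduction is the key step, and making the monotonic-cycle transport work under a retraction — checking distinctness, adjacency, and order-preservation of the image cycle — is where the real work lies.
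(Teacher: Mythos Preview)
Your reduction to the monotonicity claim ``$G\longrightarrow H \Rightarrow \alpha^o(G)\le\alpha^o(H)$'' is exactly right, and is what the paper does. But the very next sentence goes off the rails: you set $m=\alpha^o(H)$ and declare ``the goal is to show every linear ordering $\mathcal{U}$ of $V(G)$ admits a $\mathcal{U}$-monotonic cycle of length at least $m$.'' That would prove $\alpha^o(G)\ge m$, the \emph{opposite} inequality. To prove $\alpha^o(G)\le m$ you must exhibit \emph{one} ordering of $V(G)$ whose longest monotonic cycle has length at most $m$. This directional confusion is why every subsequent attempt (pushing an ordering of $G$ forward to $H$, then trying to lift a cycle back) runs into the ``edges don't pull back'' obstruction you correctly identify --- you are solving the wrong problem.

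The paper's argument is the mirror image of what you tried, and it is short. Take an \emph{optimal} ordering $\mathcal{U}=v_1,\ldots,v_{|V(H)|}$ of $V(H)$ with $|\mathcal{U}|=\alpha^o(H)$, and \emph{pull it back} to an ordering $\widetilde{\mathcal{U}}$ of $V(G)$ by listing $f^{-1}(v_1)$, then $f^{-1}(v_2)$, and so on (each block in arbitrary internal order). Now any $\widetilde{\mathcal{U}}$-monotonic cycle in $G$ \emph{pushes forward} under $f$: consecutive vertices of the cycle are adjacent in $G$, hence their $f$-images are adjacent in $H$, hence lie in distinct fibres $f^{-1}(v_i)$; combined with monotonicity this forces $f$ to be injective on the cycle, and the image is a $\mathcal{U}$-monotonic cycle of the same length. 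Thus $|\widetilde{\mathcal{U}}|\le|\mathcal{U}|=\alpha^o(H)$, giving $\alpha^o(G)\le\alpha^o(H)$. No retracts or cores are needed; the whole thing is a three-line computation once the ordering is pulled back rather than pushed forward.
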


\begin{proof}{
In order to prove the theorem, it is sufficient to show that if there exists a
surjective homomorphism $f:V(G)\rightarrow V(H)$ from a graph $G$ to a
graph $H$, then $\alpha ^o (G) \leq \alpha ^o (H)$. In this regard, let $\mathcal{U}$
be a linear ordering of $V(H)$ with $|\mathcal{U}|=\alpha ^o (H)$. Consider the
linear ordering $\mathcal{U}$ as $v_{1},v_{2},\ldots, v_{|V(H)|}$. Now, making use of the
linear ordering $\mathcal{U}$, we construct a linear ordering $\widetilde{\mathcal{U}}$
of $V(G)$ as $f^{-1}(v_{1}) f^{-1}(v_{2}) \cdots f^{-1}\left(v_{|V(H)|}\right)$. In the
linear ordering $\widetilde{\mathcal{U}}$,
for each $i$ and $j$ with $1\leq i < j \leq |V(H)|$, each vertex of $f^{-1}(v_{i})$
occurs before each vertex of $f^{-1}(v_{j})$. Indeed, first all vertices of 
$f^{-1}(v_{1}) $ are arbitrarily arranged, then all vertices of 
$f^{-1}(v_{2}) $ are arbitrarily arranged, ... , and finally
all vertices of 
$f^{-1}\left(v_{|V(H)|}\right) $ are arbitrarily arranged in order to construct 
$\widetilde{\mathcal{U}}$. The homomorphism $f:V(G)\rightarrow V(H)$
is a function from $V(G)$ onto $V(H)$ and the restriction of $f$ on the vertices of
any $\widetilde{\mathcal{U}}$-monotonic cycle is an injective function. Furthermore,
by the homomorphism $f$, the image of any $\widetilde{\mathcal{U}}$-monotonic cycle
of length $m$ forms a $\mathcal{U}$-monotonic cycle
of length $m$. Accordingly, $|\widetilde{\mathcal{U}}|\leq |\mathcal{U}|$.
Since $|\mathcal{U}|=\alpha ^o (H)$, we conclude that 
$\alpha ^o (G) \leq |\widetilde{\mathcal{U}}|\leq \alpha ^o (H)$;
and the assertion follows.
}
\end{proof}

In view of the proof of the Theorem \ref{hom}, we observe that
the circular altitude of graphs is an increasing parameter in the sense
that if there exists a homomorphism from a graph $G$ to a graph $H$,
then $\alpha ^o (G) \leq \alpha ^o (H)$. Indeed,
\begin{center}
$G\longrightarrow H\  \ \Longrightarrow \ \ \alpha ^o (G) \leq \alpha ^o (H)$.
\end{center}

Theorem \ref{block} states that the circular
altitude of every graph equals the maximum of circular altitudes of its blocks.
A natural question is the relation between the circular altitude of a graph and the
circular altitudes of its cores. 
\noindent A graph $G$ is called a {\it core graph} whenever every homomorphism from
$G$ to itself is an isomorphism. A {\it core} of a graph $\mathcal{G}$, is a subgraph of
$\mathcal{G}$, say $G$, such that the following two conditions hold simultaneously :
\begin{itemize}
\item $G$ is a core graph,
\item $G$ and $\mathcal{G}$ are homomorphically equivalent.
\end{itemize}
In other words, a core of a graph $\mathcal{G}$ is a subgraph $G$ of $\mathcal{G}$
such that $G$ is a core graph and $\mathcal{G}\longrightarrow G$. It is known that
every graph has a unique core up to isomorphism; see \cite{godsil} or \cite{hahn}
or \cite{hell}. So, Theorem \ref{hom} implies that
the circular altitude of any graph is equal to the circular altitude of its core.
\begin{cor}{The circular altitude of any graph is equal to the circular altitude of its core.
}\end{cor}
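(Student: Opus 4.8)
The plan is to deduce the statement directly from Theorem \ref{hom} together with the defining properties of a core. Let $\mathcal{G}$ be an arbitrary graph and let $G$ be a core of $\mathcal{G}$. By the definition of a core recalled above, $G$ is a subgraph of $\mathcal{G}$ that is homomorphically equivalent to $\mathcal{G}$; that is, both $\mathcal{G}\longrightarrow G$ and $G\longrightarrow \mathcal{G}$ hold, the latter being witnessed simply by the inclusion map of $G$ into $\mathcal{G}$.

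First I would invoke Theorem \ref{hom}, which asserts that homomorphically equivalent graphs have the same circular altitudes. Applying it to the pair $\mathcal{G}$ and $G$ yields $\alpha ^o (\mathcal{G}) = \alpha ^o (G)$ immediately.

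Finally, to phrase the conclusion in terms of \emph{the} core rather than \emph{a} core, I would recall the known fact (see \cite{godsil}, \cite{hahn}, or \cite{hell}) that the core of a graph is unique up to isomorphism, and observe that the circular altitude is an isomorphism invariant; indeed, an isomorphism is in particular a surjective homomorphism, so this is a special case of the inequality extracted from the proof of Theorem \ref{hom} applied in both directions. Hence any core of $\mathcal{G}$ has circular altitude equal to $\alpha ^o (\mathcal{G})$, and the assertion follows. I do not expect any genuine obstacle here: the corollary is an immediate consequence of Theorem \ref{hom}, the only point worth making explicit being that ``homomorphically equivalent'' is precisely the relation linking a graph to its core.
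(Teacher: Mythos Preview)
Your proposal is correct and follows exactly the paper's own route: the corollary is stated as an immediate consequence of Theorem~\ref{hom}, using that a graph and its core are by definition homomorphically equivalent (together with the standard uniqueness of the core up to isomorphism). There is nothing to add or change.
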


\section{Circular altitude of Cartesian product of graphs}

Let $G$ and $H$ be two graphs. {\it The Cartesian product} of $G$ and $H$, denoted
by $G \square H$, is a graph whose vertex set is $V(G) \times V(H)$, and two
vertices $(v_{1},h_{1})$ and $(v_{2},h_{2})$ are adjacent in $G \square H$ if
one of the following two conditions holds :
\begin{itemize}
\item $v_{1}=v_{2}$ and $h_{1}h_{2}\in E(H),$
\item $h_{1}=h_{2}$ and $v_{1}v_{2} \in E(G).$
\end{itemize}
The graphs $G$ and $H$ are called the
{\it factors} of the Cartesian product $G \square H$.
The Cartesian product of graphs is commutative up to isomorphism; that is,
$G \square H$ and $H \square G$ are isomorphic for any
two graphs $G$ and $H$. Also, it is associative up to isomorphism. For more
information about Cartesian product of graphs, one can see \cite{hammack}.

In $1957$, Sabidussi proved that the chromatic number of the Cartesian product
of two graphs is equal
to the maximum of chromatic numbers of its factors \cite{sabidussi}.
More Precisely, for any two graphs
$G$ and $H$, we have $\chi (G \square H)=\max \{\chi (G), \chi (H)\}$.
Zhu showed that this fact also
holds for the circular chromatic number; that is, for any two graphs $G$ and
$H$, the equality $\chi_{c} (G\square H)=\max \{\chi_{c} (G), \chi_{c} (H)\}$ holds \cite{zhu}.
In the next theorem, we are concerned with how circular altitude behaves on Cartesian
product of graphs. We prove an analogue of the two mentioned equalities for the circular
altitude parameter.

\begin{theorem}{The circular altitude of the Cartesian product of two graphs is equal
to the maximum of circular altitudes of its factors. Indeed, for any two graphs
$G$ and $H$ we have $\alpha ^o (G \square H)=\max \{\alpha ^o (G), \alpha ^o (H)\}$.
}
\end{theorem}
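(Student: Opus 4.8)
The plan is to prove the two inequalities separately. First I would establish $\alpha^o(G \square H) \geq \max\{\alpha^o(G), \alpha^o(H)\}$, which should follow from the monotonicity of $\alpha^o$ under subgraphs: fixing any vertex $h_0 \in V(H)$, the map $v \mapsto (v,h_0)$ embeds $G$ as a subgraph of $G \square H$, so $\alpha^o(G) \leq \alpha^o(G \square H)$, and symmetrically for $H$. (Alternatively, one can invoke Theorem~\ref{hom} together with the fact that each factor admits a homomorphism to $G \square H$ when the other factor is nonempty, using the monotonicity remark $G \longrightarrow H \Rightarrow \alpha^o(G) \leq \alpha^o(H)$.)

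The substance of the theorem is the reverse inequality $\alpha^o(G \square H) \leq \max\{\alpha^o(G), \alpha^o(H)\}$. To prove this, I would set $k := \max\{\alpha^o(G), \alpha^o(H)\}$, fix linear orderings $\mathcal{U}_G$ of $V(G)$ and $\mathcal{U}_H$ of $V(H)$ with $|\mathcal{U}_G| = \alpha^o(G)$ and $|\mathcal{U}_H| = \alpha^o(H)$, and build from them a linear ordering $\mathcal{W}$ of $V(G) \times V(H)$ for which $|\mathcal{W}| \leq k$; this yields $\alpha^o(G \square H) \leq |\mathcal{W}| \leq k$. The natural candidate is the lexicographic ordering induced by $\mathcal{U}_G$ on the first coordinate and $\mathcal{U}_H$ on the second: write $u_1, u_2, \ldots, u_{|V(G)|}$ for $\mathcal{U}_G$ and $w_1, \ldots, w_{|V(H)|}$ for $\mathcal{U}_H$, and let $\mathcal{W}$ list the vertices of $G \square H$ as the blocks $\{(u_1, w_j)\}_j, \{(u_2, w_j)\}_j, \ldots$, each block internally ordered by $\mathcal{U}_H$.

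The crux is then analyzing an arbitrary $\mathcal{W}$-monotonic cycle $C$ of length $m$ and showing $m \leq k$. Project $C$ onto the first coordinate. Because edges of $G \square H$ either keep the first coordinate fixed or keep the second coordinate fixed, the sequence of first coordinates visited by $C$ is a closed walk in $G$ in which consecutive entries are either equal or adjacent; moreover, by the lexicographic construction, the distinct first coordinates that occur must occur in $\mathcal{U}_G$-increasing order as we traverse $C$ in the cyclic sense, with a single "wraparound." The key structural claim I expect to need is: either $C$ lies entirely inside a single $G$-layer $\{u_i\} \times V(H)$ — in which case its projection to the second coordinate is a $\mathcal{U}_H$-monotonic cycle of length $m$, giving $m \leq \alpha^o(H) \leq k$ — or the first coordinates are not all equal, in which case the distinct first-coordinate values, traversed in cyclic order around $C$, form a $\mathcal{U}_G$-monotonic cycle in $G$ of some length $r$, while within each maximal run at a fixed first coordinate $u_i$ the second coordinates form a $\mathcal{U}_H$-monotonic path; combining these I would bound $m$ by a sum that telescopes to at most $\max\{r, \ldots\} \leq k$. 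Making this dichotomy and the run/segment bookkeeping precise — in particular verifying that the cyclic order of distinct first coordinates really is $\mathcal{U}_G$-monotonic and that the "vertical" segments don't let $m$ exceed $k$ — is the main obstacle, and it is where the lexicographic choice of $\mathcal{W}$ (rather than an arbitrary shuffle) does the real work. Once $|\mathcal{W}| \leq k$ is established, the theorem follows by combining the two inequalities.
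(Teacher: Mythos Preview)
Your setup is exactly that of the paper: the lower bound via subgraph monotonicity, and the upper bound via the lexicographic ordering $\mathcal{W}$ built from optimal orderings $\mathcal{U}_G$ and $\mathcal{U}_H$. Where you diverge is in the analysis of a $\mathcal{W}$-monotonic cycle, and there you are missing the one observation that makes the argument go through.

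You note that the first-coordinate sequence $i_1 \leq i_2 \leq \cdots \leq i_m$ is weakly increasing (this is immediate from lex). The point you do not record is that the \emph{second}-coordinate sequence is weakly increasing as well: for consecutive vertices $(g_{i_k},h_{j_k})$ and $(g_{i_{k+1}},h_{j_{k+1}})$, the Cartesian-product edge forces either $j_k = j_{k+1}$ or $i_k = i_{k+1}$, and in the latter case lex-increase of the pair forces $j_k < j_{k+1}$. Hence $j_1 \leq j_2 \leq \cdots \leq j_m$ too. Now the wraparound edge gives either $i_1 = i_m$ or $j_1 = j_m$, and together with the two weak monotonicities this means \emph{one of the two coordinate sequences is constant}. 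So every $\mathcal{W}$-monotonic cycle lies entirely in a single $G$-layer or a single $H$-layer and projects to a $\mathcal{U}_H$- or $\mathcal{U}_G$-monotonic cycle of the same length $m$. That is the entire argument in the paper.

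Your proposed ``second branch'' --- distinct first-coordinate values forming a $\mathcal{U}_G$-monotonic cycle of length $r$, with vertical runs forming $\mathcal{U}_H$-monotonic paths, then a telescoping bound --- is both unnecessary and, as sketched, does not work. Monotonic \emph{paths} in $\mathcal{U}_H$ can be arbitrarily long relative to $\alpha^o(H)$ (take $H = P_n$: $\alpha^o(H) = 2$ but the natural ordering has a monotonic path of length $n$), so no inequality of the form $m = \sum \ell_i \leq \max\{r,\ldots\} \leq k$ is available from that data alone. The missing observation above is precisely what rules out any genuine mixing of the two coordinates and collapses your second branch to ``single $H$-layer, $m = r \leq \alpha^o(G)$.''
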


\begin{proof}{
Let the vertex sets of graphs $G$ and $H$ be
$V(G):=\{g_{1},g_{2}, \ldots , g_{|V(G)|}\}$ and
$V(H):=\{h_{1},h_{2}, \ldots , h_{|V(H)|}\}$, respectively.
For each integer $i$ in $\{1,2, \ldots , |V(G)|\}$, the subgraph of
$G \square H$ induced by the set $\{g_{i}\} \times V(H)$ is isomorphic to
$H$. Denote this subgraph by $\mathcal{H}_{i}$. This shows that
$\alpha ^o (H) \leq \alpha ^o (G \square H)$. Also,
for each $j$ in $\{1,2, \ldots , |V(H)|\}$, the subgraph of
$G \square H$ induced by the set $V(G) \times \{h_{j}\}$ is isomorphic to
$G$. Similarly, we denote this subgraph by $\mathcal{G}_{j}$. Thus,
$\alpha ^o (G) \leq \alpha ^o (G \square H)$. We deduce that
\begin{center}
$\alpha ^o (G \square H) \geq \max \{\alpha ^o (G), \alpha ^o (H)\}$.
\end{center}
So, we shall have established the theorem
if we prove the following :
\begin{center}
$\alpha ^o (G \square H) \leq \max \{\alpha ^o (G), \alpha ^o (H)\}$.
\end{center}
In this regard, we consider below three cases.

\noindent \textbf{Case 1.} The case that $\alpha ^o (G)=1$.

\noindent In this case, $G$ has no edges and each component of
$G \square H$ is isomorphic to the graph $H$. In view of the
proof of the Theorem \ref{block}, we find that the circular
altitude of every graph equals the maximum of circular altitudes of its components.
This shows that in this case, for the graph
$G \square H$ we have
\begin{center}
$\alpha ^o (G \square H)=\alpha ^o (H)\leq \max \{\alpha ^o (G), \alpha ^o (H)\}$.
\end{center}

\noindent \textbf{Case 2.} The case that $\alpha ^o (H)=1$.

\noindent Since $G \square H$ and $H \square G$ are isomorphic,
similar to the previous case,
\begin{center}
$\alpha ^o (G \square H)=\alpha ^o (H \square G)=\alpha ^o (G)\leq 
\max \{\alpha ^o (G), \alpha ^o (H)\}$.
\end{center}

\noindent \textbf{Case 3.} The case that $\alpha ^o (G)\geq 2$ and $\alpha ^o (H)\geq 2$.

\noindent Without loss of generality, we may assume that
$\mathcal{U}_{G}:=g_{1},g_{2}, \ldots , g_{|V(G)|}$ and
$\mathcal{U}_{H}:=h_{1},h_{2}, \ldots , h_{|V(H)|}$
are linear orderings of $G$ and $H$, respectively, in such a way that
$\alpha ^o (G)=\left|\mathcal{U}_{G}\right|$ and
$\alpha ^o (H)=\left|\mathcal{U}_{H}\right|$. Let us consider the
linear ordering $\mathcal{U}_{G \square H}$
of $V(G \square H)$ for which the vertices of $V(G \square H)$
are ordered lexicographically. More precisely, in the
linear ordering $\mathcal{U}_{G \square H}$,
a vertex $(g_{i},h_{j})$ occurs before a vertex $(g_{i'},h_{j'})$
iff one of the following two conditions holds :
\begin{itemize}
\item $ i<i'$
\item $ i=i' \ {\rm and} \ j<j' \ .$
\end{itemize}
Certainly, the inequality $\alpha ^o (G \square H) \leq \left|\mathcal{U}_{G \square H}\right|$ holds.
Our aim is showing that
$\left|\mathcal{U}_{G \square H}\right|\leq \max \{\alpha ^o (G), \alpha ^o (H)\}$.
The procedure is regarding an arbitrary $\mathcal{U}_{G \square H}$-monotonic cycle
$(g_{i_{1}},h_{j_{1}}),(g_{i_{2}},h_{j_{2}}), \ldots ,(g_{i_{m}},h_{j_{m}})$ as fixed
and proving that $m\leq \max \{\alpha ^o (G), \alpha ^o (H)\}$.

\noindent There is nothing to prove when $m=1$. So, assume that $m\geq 2$.
The vertices of $V(G \square H)$
are ordered lexicographically in the
linear ordering $\mathcal{U}_{G \square H}$. Hence,
$i_{1} \leq i_{2} \leq \cdots \leq i_{m}$.
We show that also
$j_{1} \leq j_{2} \leq \cdots \leq j_{m}$ holds as well. Let $k$ be an integer in
$\{1,2,\ldots ,m-1\}$. Obviously, since $(g_{i_{k}},h_{j_{k}})$ and $(g_{i_{k+1}},h_{j_{k+1}})$
are adjacent in $G \square H$, we have either 
$h_{j_{k}}=h_{j_{k+1}}$
or
$g_{i_{k}}=g_{i_{k+1}}$.
The former implies $j_{k} = j_{k+1}$. The latter imples
$i_{k}=i_{k+1}$; and since 
$(g_{i_{k}},h_{j_{k}})$ occurs before $(g_{i_{k+1}},h_{j_{k+1}})$, we must have
$j_{k} < j_{k+1}$. We conclude that $j_{1} \leq j_{2} \leq \cdots \leq j_{m}$.

\noindent Now, since the sequence $(g_{i_{1}},h_{j_{1}}),(g_{i_{2}},h_{j_{2}}), \ldots ,(g_{i_{m}},h_{j_{m}})$
is a $\mathcal{U}_{G \square H}$-monotonic cycle, two vertices
$(g_{i_{1}},h_{j_{1}})$ and $(g_{i_{m}},h_{j_{m}})$
are adjacent in $G \square H$. Therefore,
one of the following two subcases holds :

\noindent \textbf{Subcase 3.1.} Two vertices $h_{j_{1}}$ and $h_{j_{m}}$
are adjacent in $H$, and $g_{i_{1}}=g_{i_{m}}$.

\noindent In this subcase, since $g_{i_{1}}=g_{i_{m}}$, we have $i_{1}=i_{m}$;
and therefore, $i_{1} = i_{2} = \cdots = i_{m}$. So, $g_{i_{1}}=g_{i_{2}}=\cdots =g_{i_{m}}$;
and thus, the $\mathcal{U}_{G \square H}$-monotonic cycle
$(g_{i_{1}},h_{j_{1}}),(g_{i_{2}},h_{j_{2}}), \ldots ,(g_{i_{m}},h_{j_{m}})$
lies in $\mathcal{H}_{i_{1}}$. Besides, the sequence
$h_{j_{1}},h_{j_{2}}, \ldots ,h_{j_{m}}$ forms a $\mathcal{U}_{H}$-monotonic cycle.
Therefore,
\begin{center}
$m\leq \left|\mathcal{U}_{H}\right|=\alpha ^o (H)\leq \max \{\alpha ^o (G), \alpha ^o (H)\}$.
\end{center}

\noindent \textbf{Subcase 3.2.} Two vertices $g_{i_{1}}$ and $g_{i_{m}}$
are adjacent in $G$, and $h_{j_{1}}=h_{j_{m}}$.

\noindent In this subcase, the equality $h_{j_{1}}=h_{j_{m}}$ implies that $j_{1}=j_{m}$.
So, $j_{1} = j_{2} = \cdots = j_{m}$ and
$h_{j_{1}}=h_{j_{2}}=\cdots =h_{j_{m}}$.
This shows that the $\mathcal{U}_{G \square H}$-monotonic cycle
$(g_{i_{1}},h_{j_{1}}),(g_{i_{2}},h_{j_{2}}), \ldots ,(g_{i_{m}},h_{j_{m}})$
lies in $\mathcal{G}_{j_{1}}$; and also, the sequence
$g_{i_{1}},g_{i_{2}}, \ldots ,g_{i_{m}}$ forms a $\mathcal{U}_{G}$-monotonic cycle.
Accordingly,
\begin{center}
$m\leq \left|\mathcal{U}_{G}\right|=\alpha ^o (G)\leq \max \{\alpha ^o (G), \alpha ^o (H)\}$;
\end{center}
which completes the proof of the theorem.
}
\end{proof}


\begin{thebibliography}{8}

\bibitem{bamberg} J. Bamberg, B. Corr, A. Devillers, D. Hawtin, I. Pivotto, and E. Swartz, 
The circular altitude of a graph, {\em arXiv:1608.06127v1} (2016).

\bibitem{cameron} P. J. Cameron, Altitude and chromatic number, {\em Available at http://www.maths.qmul.ac.uk/~pjc/preprints/valt.pdf} (May 2004).

\bibitem{godsil} C. Godsil and G. Royle, Algebraic graph theory, 
{\em Springer-Verlag, New York} (2001).

\bibitem{hahn}
G. Hahn and C. Tardif, Graph homomorphisms: structure and
symmetry, {\em in Graph Symmetry, G.~Hahn and G.~Sabidussi, eds.,
no.~497 in NATO Adv. Sci. Inst. Ser. C Math. Phys. Sci., Kluwer,
Dordrecht} (1997), 107--167.

\bibitem{hammack} R. Hammack, W. Imrich, and S. Klav{\v{z}}ar, Handbook of product graphs, 
{\em Discrete Mathematics and its Applications (Boca Raton). CRC Press,
Boca Raton, FL, second edition} (2011).

\bibitem{hell} P. Hell and J. Ne\v{s}et\v{r}il, Graphs and homomorphisms, 
{\em Oxford Lecture Series in Mathematics and its Applications {\bf
28}, Oxford University Press, Oxford} (2004).

\bibitem{sabidussi} G. Sabidussi, Graphs with given group
and given graph-theoretical properties, {\em Canad. J. Math.}\ {\bf
9} (1957), 515--525.

\bibitem{zhu} X. Zhu, Circular chromatic number: a survey, {\em Discrete Mathematics}\ {\bf
229} (2001), 371--410.

\end{thebibliography}
\end{document}